\newtheorem{theorem}{Theorem}[section]
\newtheorem{lemma}[theorem]{Lemma}
\numberwithin{equation}{section}
\begin{document}
 	
\baselineskip=15pt

\title[Effective cone of a Grassmann bundle over a curve]{Effective cone of a Grassmann bundle over
a curve defined over $\overline{\mathbb F}_p$}

\author[I. Biswas]{Indranil Biswas}

\address{Department of Mathematics, Shiv Nadar University, NH91, Tehsil
Dadri, Greater Noida, Uttar Pradesh 201314, India}

\email{indranil.biswas@snu.edu.in, indranil29@gmail.com}

\author[S. M. Garge]{Shripad M. Garge}

\address{Department of Mathematics, Indian Institute of Technology Bombay,
Powai, Mumbai 400076, Maharashtra, India}

\email{shripad@math.iitb.ac.in, smgarge@gmail.com}

\author[K. Hanumanthu]{Krishna Hanumanthu}

\address{Chennai Mathematical Institute, H1 SIPCOT IT Park, Siruseri, Kelambakkam 603103, India}

\email{krishna@cmi.ac.in}

\subjclass[2010]{14H60, 14G15, 11G25}

\keywords{Pseudo-effective cone, Harder-Narasimhan filtration, Grassmann bundle.}

\date{}

\begin{abstract}
Let $X$ be an irreducible smooth projective curve defined over $\overline{\mathbb F}_p$ and $E$ a vector bundle on
$X$ of rank at least two. For any $1\, \leq\, r\, <\, {\rm rank}(E)$, let ${\rm Gr}_r(E)$ be the Grassmann bundle over $X$ 
parametrizing all the $r$ dimensional quotients of the fibers of $E$. 
We prove that the effective cone in ${\rm NS}({\rm Gr}_r(E))\otimes_{\mathbb Z} {\mathbb R}$ coincides
with the pseudo-effective cone in ${\rm NS}({\rm Gr}_r(E))\otimes_{\mathbb Z} {\mathbb R}$. When $r\,=\,1$ or
${\rm rank}(E)-1$, this was proved in \cite{Mo}.
\end{abstract}

\maketitle
 	
\section{introduction} 

Let $X$ be an irreducible smooth projective curve defined over an algebraically closed field $k$, and let $E$ be a vector bundle over
$X$ of rank $N$, with $N\, >\, 1$. Fix an integer $1\,\leq\, r\, \leq\, N-1$, and denote by ${\rm Gr}_r(E)$ the Grassmann bundle over $X$
parametrizing all the $r$ dimensional quotients of the fibers of $E$. The N\'eron--Severi group ${\rm NS}({\rm Gr}_r(E))$ is
the group of divisors on ${\rm Gr}_r(E)$ modulo algebraic equivalence, so ${\rm NS}({\rm Gr}_r(E))$ coincides with the
group of connected components of the Picard group ${\rm Pic}({\rm Gr}_r(E))$. The pseudo-effective cone of ${\rm NS}
({\rm Gr}_r(E))_{\mathbb R}\, :=\, {\rm NS}({\rm Gr}_r(E))\otimes_{\mathbb Z} {\mathbb R}$ is the closure of the effective
cone of ${\rm NS}({\rm Gr}_r(E))_{\mathbb R}$. In \cite{BHP} the pseudo-effective cone of ${\rm NS}({\rm Gr}_r(E))_{\mathbb R}$
was computed.

Let $p$ be a prime number, and let $\mathbb{F}_p$ be the field of order $p$. 
When $k$ is the algebraic closure $\overline{\mathbb F}_p$ of $\mathbb{F}_p$, Moriwaki proved that the 
pseudo-effective cone of ${\rm NS}({\rm Gr}_1(E))_{\mathbb R}$ coincides with the effective cone of ${\rm NS}({\rm Gr}_1(E))_{\mathbb R}$
\cite[p.~802, Theorem 0.4]{Mo}. Replacing $E$ by its dual $E^*$ it is deduced from this that the 
pseudo-effective cone of ${\rm NS}({\rm Gr}_{N-1}(E))_{\mathbb R}$ coincides with the effective cone of
${\rm NS}({\rm Gr}_{N-1}(E))_{\mathbb R}$.

Our aim here is to prove the following (see Theorem \ref{prop1}):

\noindent
\textbf{Theorem:} 
{\it Let $k \,=\,  \overline{\mathbb F}_p$ for a prime $p$. The pseudo-effective cone of ${\rm NS}({\rm Gr}_r(E))_{\mathbb R}$
coincides with the effective cone of ${\rm NS}({\rm Gr}_r(E))_{\mathbb R}$ for all $1\,\leq\, r\, \leq\, N-1$.}

\section{Pseudo-effective cone of a Grassmann bundle}

Let $k$ be an algebraically closed field. Let $X$ be an irreducible smooth projective curve defined over $k$.
Take any vector bundle $E$ over $X$ such that
\begin{equation}\label{e0}
N\,:=\, {\rm rank}(E)\, \geq\, 2.
\end{equation}
Fix an integer $1\,\leq\, r\, \leq\, N-1$. Let
\begin{equation}\label{e1}
\phi\,\,:\,\, {\rm Gr}_r(E)\,\, \longrightarrow\,\, X
\end{equation}
be the Grassmann bundle parametrizing the $r$ dimensional quotients of the fibers of $E$. Let
${\mathcal O}_{{\rm Gr}_r(E)}(1)$ be the tautological relatively ample line bundle over
${\rm Gr}_r(E)$. The fiber of ${\mathcal O}_{{\rm Gr}_r(E)}(1)$
over the point of ${\rm Gr}_r(E)$ representing a quotient $E_x \, \longrightarrow\, Q$, where $x\, \in\, X$, is $\bigwedge^r Q$.
Fix a line bundle $L$ on $X$ of degree one. The N\'eron--Severi group 
${\rm NS}({\rm Gr}_r(E))$ of ${\rm Gr}_r(E)$, which is
the group of connected components of the Picard group ${\rm Pic}({\rm Gr}_r(E))$, is the free
abelian group generated by the classes of ${\mathcal O}_{{\rm Gr}_r(E)}(1)$ and $\phi^*L$,
where $\phi$ is the projection in \eqref{e1}; this follows from the Seesaw
Theorem \cite[p.~54, Corollary 6]{Mu}. Denote
\begin{equation}\label{e2}
{\rm NS}({\rm Gr}_r(E))_{\mathbb R}\,\, :=\,\, {\rm NS}({\rm Gr}_r(E))
\otimes_{\mathbb Z} {\mathbb R}.
\end{equation}
A cone in ${\rm NS}({\rm Gr}_r(E))_{\mathbb R}$ is a convex subset of it closed under the multiplication by nonnegative real numbers.

The effective cone of ${\rm Gr}_r(E)$ is the cone in ${\rm NS}({\rm Gr}_r(E))_{\mathbb R}$ (defined
in \eqref{e2}) generated by the effective divisors. The pseudo-effective cone of ${\rm Gr}_r(E)$ is the closure,
in ${\rm NS}({\rm Gr}_r(E))_{\mathbb R}$, of the effective cone.
The pseudo-effective cone of ${\rm Gr}_r(E)$ was computed in \cite{BHP}, which will be briefly recalled.

\subsection{When the characteristic is zero}

First assume that the characteristic of $k$ is zero.

Let
$$
0\,=\, E_0\, \subset\, E_1\, \subset\, \cdots\, \subset\, E_{m-1}\, \subset\, E_m\,=\, E
$$
be the Harder--Narasimhan filtration of $E$ \cite[p.~16, Theorem 1.3.4]{HL}. Let $1\,\leq\,
\ell\, \leq\, m$ be the unique integer such that
$\text{rank}(E_{\ell-1})\, <\, r \, \leq\, {\rm rank}(E_\ell)$. Define
\begin{equation}\label{e4}
\lambda\,\,:=\,\, \text{degree}(E_{\ell-1}) + (r- \text{rank}(E_{\ell-1}))\mu(E_\ell/E_{\ell-1}),
\end{equation}
where $\mu(F)\,:=\, \frac{\text{degree}(F)}{{\rm rank}(F)}$.

\begin{lemma}[{\cite[p.~74, Theorem 4.1]{BHP}}]\label{lem1}
The pseudo-effective cone of ${\rm Gr}_r(E)$ is generated by $\phi^*c_1(L)$ and
$c_1({\mathcal O}_{{\rm Gr}_r(E)}(1))- \lambda \phi^*c_1(L)$, where $\lambda$ is defined in \eqref{e4}.
\end{lemma}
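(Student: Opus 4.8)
The plan is to exploit that $\mathrm{NS}(\mathrm{Gr}_r(E))_{\mathbb R}$ is two-dimensional, spanned by $h:=c_1(\mathcal O_{\mathrm{Gr}_r(E)}(1))$ and $f:=\phi^*c_1(L)$, so the pseudo-effective cone is a closed convex cone with exactly two extremal rays, and it suffices to pin down those two rays. One ray is immediate: $f$ is the class of a fibre of $\phi$, hence effective, and it is extremal because every pseudo-effective class $ah+bf$ must satisfy $a\ge 0$. To see the latter I would pair with a line $C$ in a single Grassmannian fibre $\mathrm{Gr}_r(E_x)$ (under the Pl\"ucker embedding); such lines sweep out the fibre and the fibres sweep out $\mathrm{Gr}_r(E)$, so $C$ is a covering curve and meets every pseudo-effective class non-negatively, while $f\cdot C=0$ and $h\cdot C>0$, forcing $a\ge 0$.

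For the remaining ray I would determine the threshold $\lambda_0:=\sup\{t\in\mathbb R:\ h-tf \text{ is pseudo-effective}\}$ and show $\lambda_0=\lambda$; granting this, the cone is $\{ah+bf:\ a\ge 0,\ b+\lambda a\ge 0\}$, with extremal rays $\mathbb R_{\ge0}f$ and $\mathbb R_{\ge0}(h-\lambda f)$, which is the assertion. The class $mh-sf$ is represented by $\mathcal O_{\mathrm{Gr}_r(E)}(m)\otimes\phi^*L^{-s}$, and by the projection formula together with Bott's theorem its space of sections is $H^0\big(X,\,\mathbb S_{(m^r)}(E)\otimes L^{-s}\big)$, where $\mathbb S_{(m^r)}$ is the Schur functor for the rectangular partition with $r$ rows and $m$ columns (so $\phi_*\mathcal O(1)=\bigwedge^rE$ and, for $r=1$, $\phi_*\mathcal O(m)=\mathrm{Sym}^mE$). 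Thus effectivity of $mh-sf$ is exactly non-vanishing of this cohomology group, and the whole problem is converted into a question about slopes of a Schur functor of $E$.

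The crux is the slope computation $\mu_{\max}\big(\mathbb S_{(m^r)}(E)\big)=m\lambda$, and I expect this to be the main obstacle. First, $\lambda$ from \eqref{e4} is exactly the maximal degree of a rank-$r$ subsheaf of $E$: the Harder--Narasimhan filtration lets one build such a subsheaf greedily from the highest-slope graded pieces, giving $\deg(E_{\ell-1})+(r-\mathrm{rank}(E_{\ell-1}))\mu(E_\ell/E_{\ell-1})$. For a maximal such subsheaf $F\subset E$ one has the line subbundle $(\det F)^{\otimes m}=\mathbb S_{(m^r)}(F)\hookrightarrow\mathbb S_{(m^r)}(E)$ of degree $m\lambda$, so $\mu_{\max}\ge m\lambda$. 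The reverse inequality is where characteristic zero enters decisively: since tensor products of semistable bundles are semistable in characteristic $0$, the operator $\mu_{\max}$ is additive under $\otimes$, and $\mathbb S_{(m^r)}(E)$ is a direct summand of $\big(\bigwedge^rE\big)^{\otimes m}$, whence $\mu_{\max}(\mathbb S_{(m^r)}(E))\le m\,\mu_{\max}(\bigwedge^rE)=m\lambda$, using $\mu_{\max}(\bigwedge^rE)=\lambda$ (again via the induced filtration on $\bigwedge^rE$).

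Finally I would assemble the two bounds. For the upper bound, a nonzero section of $\mathbb S_{(m^r)}(E)\otimes L^{-s}$ is an inclusion $\mathcal O_X\hookrightarrow\mathbb S_{(m^r)}(E)\otimes L^{-s}$, forcing $0\le\mu_{\max}(\mathbb S_{(m^r)}(E))-ms=m\lambda-ms$, i.e. $s/m\le\lambda$; letting $s/m\to t$ this gives $\lambda_0\le\lambda$. For the lower bound, fix $t<\lambda$ and let $F_m\subset\mathbb S_{(m^r)}(E)$ be the maximal destabilizing (semistable) subsheaf of slope $m\lambda$; choosing $m$ large and $s=\lceil tm\rceil$, the bundle $F_m\otimes L^{-s}$ is semistable of slope $m\lambda-s\to+\infty$, so once this slope exceeds $2g-2$ one has $H^1=0$ and hence $H^0(F_m\otimes L^{-s})\neq0$ by Riemann--Roch, giving a section of $\mathbb S_{(m^r)}(E)\otimes L^{-s}$. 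Thus $h-(s/m)f$ is effective with $s/m\to t$, and closedness of the pseudo-effective cone yields $\lambda_0\ge\lambda$. Combining, $\lambda_0=\lambda$ and the cone is generated by $\phi^*c_1(L)$ and $c_1(\mathcal O_{\mathrm{Gr}_r(E)}(1))-\lambda\,\phi^*c_1(L)$, as claimed.
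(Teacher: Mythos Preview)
The paper does not prove this lemma; it is quoted verbatim from \cite[Theorem~4.1]{BHP}. So there is no in-paper argument to compare against, and your proposal should be judged on its own merits.

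Your overall architecture is correct and is essentially the standard route: reduce to a threshold computation on the two-dimensional N\'eron--Severi space, translate effectivity of $mh-sf$ into nonvanishing of $H^0\big(X,\mathbb S_{(m^r)}(E)\otimes L^{-s}\big)$ via $\phi_*\mathcal O(m)=\mathbb S_{(m^r)}(E)$, and then control both directions through $\mu_{\max}\big(\mathbb S_{(m^r)}(E)\big)=m\lambda$, using that tensor products (hence Schur functors) of semistable bundles are semistable in characteristic zero. The upper bound $\mu_{\max}\le m\lambda$ and the Riemann--Roch production of sections for the lower threshold are fine.

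There is, however, a genuine gap in your justification of $\mu_{\max}\big(\mathbb S_{(m^r)}(E)\big)\ge m\lambda$. You assert that $\lambda$ is \emph{exactly} the maximal degree of a rank-$r$ subsheaf of $E$ and then use $\det F$ for such a maximal $F$. This is false in general: if $E$ is semistable of rank $3$ and degree $1$ and $r=1$, then $\lambda=1/3$, while every line subsheaf of $E$ has degree $\le 0$. The number $\lambda$ is the value of the Harder--Narasimhan polygon at $r$, not an achieved subsheaf degree; in particular it need not be an integer. Consequently the inclusion $(\det F)^{\otimes m}\hookrightarrow \mathbb S_{(m^r)}(E)$ that you invoke may not exist.

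The fix is to use the induced filtration directly rather than a hypothetical $F$. With $n=\mathrm{rank}(E_{\ell-1})$ and $G_\ell=E_\ell/E_{\ell-1}$, the filtration $E_{\ell-1}\subset E$ induces on $\mathbb S_{(m^r)}(E)$ a smallest nonzero subsheaf $(\det E_{\ell-1})^{\otimes m}\otimes \mathbb S_{(m^{r-n})}(E/E_{\ell-1})$, and the inclusion $G_\ell\hookrightarrow E/E_{\ell-1}$ gives $(\det E_{\ell-1})^{\otimes m}\otimes \mathbb S_{(m^{r-n})}(G_\ell)\hookrightarrow \mathbb S_{(m^r)}(E)$. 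Since $G_\ell$ is semistable and we are in characteristic zero, $\mathbb S_{(m^{r-n})}(G_\ell)$ is semistable of slope $m(r-n)\mu(G_\ell)$, so this subsheaf has slope $m\,\deg(E_{\ell-1})+m(r-n)\mu(G_\ell)=m\lambda$. With this correction your argument goes through.
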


\subsection{When the characteristic is positive}

Assume that the characteristic of $k$ is $p$, with $p\, >\, 0$.

For any vector bundle $W$ on $X$, we have the vector bundle $F^*_XW$ on $X$, where $F_X$ is the absolute Frobenius
morphism of $X$. We recall that $F^*_X W$ is the subbundle of $W^{\otimes p}$ defined by the image of the morphism
$W\, \longrightarrow\, W^{\otimes p}$ that sends any $w\, \in\, W$ to $w^{\otimes p}$. For any $j\, \geq\, 1$, the $j$--fold
iteration of $W\, \longmapsto\, F^*_XW$ will be denoted by $(F^j_X)^*W$; by $(F^0_X)^*W$ we will denote $W$.

For any $j \, \geq\, 0$, let
$$
0\,=\, E_{j,0}\, \subset\, E_{j,1}\, \subset\, \cdots\, \subset\, E_{j,n_j}\,=\, (F^j_X)^*E
$$
be the Harder--Narasimhan filtration of $(F^j_X)^*E$. There is a nonnegative integer $\delta\,=\, \delta(E)$ such that
$$
0\,=\, (F^j_X)^*E_{\delta,0}\, \subset\, (F^j_X)^*E_{\delta,1}\, \subset\, \cdots\, \subset\, (F^j_X)^*E_{\delta,n_\delta}\,=\,
(F^j_X)^*(F^\delta_X)^*E\,=\, (F^{\delta+j}_X)^*E
$$
is the Harder--Narasimhan filtration of $(F^{\delta+j}_X)^*E$ for all $j\, \geq\, 0$;
so $n_\delta\,=\, n_{\delta+j}$ and
$(F^j_X)^*E_{\delta,i}\,=\, E_{\delta+j,i}$ for all $1\, \leq\, i\,\leq\, n_\delta$.
Note that if $\delta$ satisfies the above
condition, then any integer greater than $\delta$ also satisfies the above condition. Also, if $\delta$ satisfies the above
condition, then $E_{\delta,i}/E_{\delta,i-1}$ is strongly semistable for all
$1\, \leq\, i\, \leq\, n_\delta$.

Fix a $\delta$ satisfying the above condition.
Let $1\,\leq\, \ell\, \leq\, n_\delta$ be the unique integer such that
$\text{rank}(E_{\delta, \ell-1})\, <\, r \, \leq\, {\rm rank}(E_{\delta,\ell})$.
Define
\begin{equation}\label{e5}
\lambda\,\,:=\,\, \frac{1}{p^\delta}\left(\text{degree}(E_{\delta,\ell-1}) +
(r- \text{rank}(E_{\delta,\ell-1}))\mu(E_{\delta,\ell}/E_{\delta, \ell-1})\right).
\end{equation}
Note that $\lambda$ does not depend on the choice of $\delta$.

\begin{lemma}[{\cite[p.~76, Theorem 4.4]{BHP}}]\label{lem2}
The pseudo-effective cone of ${\rm Gr}_r(E)$ is generated by $\phi^*c_1(L)$ and
$c_1({\mathcal O}_{{\rm Gr}_r(E)}(1))- \lambda \phi^*c_1(L)$, where $\lambda$ is defined in \eqref{e5}.
\end{lemma}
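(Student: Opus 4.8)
The plan is to work in the two-dimensional space ${\rm NS}({\rm Gr}_r(E))_{\mathbb R}$ with basis $\xi\,:=\,c_1(\mathcal O_{{\rm Gr}_r(E)}(1))$ and $f\,:=\,\phi^*c_1(L)$, and to pin down the two boundary rays of the closed convex pseudo-effective cone. One ray is $f$ itself: a fiber of $\phi$ is effective, while a line in a Grassmannian fiber (for the Pl\"ucker polarization) is a covering curve $C_0$ with $\xi\cdot C_0=1$ and $f\cdot C_0=0$, so every pseudo-effective class $a\xi+bf$ satisfies $a=(a\xi+bf)\cdot C_0\geq 0$; this fixes $f$ as one edge. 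To locate the opposite edge I would first remove the positive-characteristic subtlety. Fix $\delta$ as in the statement, put $F\,:=\,F^\delta_X$, and form the fiber product, yielding a finite flat surjection $\Phi\colon{\rm Gr}_r(F^*E)\to{\rm Gr}_r(E)$ of degree $p^\delta$ lying over $F$. As $\Phi$ is finite, flat and surjective, $\Phi^*$ sends effective divisors to effective divisors while $p^{-\delta}\Phi_*$ inverts it on N\'eron--Severi groups (projection formula $\Phi_*\Phi^*=p^\delta\,{\rm id}$); hence a class is pseudo-effective exactly when its $\Phi^*$-pullback is. Since $\Phi^*\xi=\xi'$ and $\Phi^*f=p^\delta f'$, the cone $\langle f,\ \xi-\lambda f\rangle$ pulls back to $\langle f',\ \xi'-(p^\delta\lambda)f'\rangle$, and by \eqref{e5} the number $p^\delta\lambda$ is precisely the invariant \eqref{e4} of the bundle $F^*E$, whose Harder--Narasimhan quotients are strongly semistable. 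So, renaming $F^*E$ as $E$, it suffices to treat a bundle whose Harder--Narasimhan quotients are strongly semistable, with $\lambda$ equal to the slope sum \eqref{e4}; this is exactly where the factor $1/p^\delta$ originates.

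For such a bundle both edges flow from a single input, the slope equality $\mu_{\max}\bigl(\mathbb S_{(n^r)}E\bigr)=n\lambda$ for all $n\geq 1$, where $\mathbb S_{(n^r)}E=\phi_*\mathcal O_{{\rm Gr}_r(E)}(n)$ is the Schur functor for the partition with $r$ parts equal to $n$ (the higher direct images vanishing by Kempf vanishing on the Grassmannian fibers), and $\lambda=\mu_{\max}(\bigwedge^r E)$ is the sum of the top $r$ slopes in \eqref{e4}. Granting this, the inner bound, that $\langle f,\ \xi-\lambda f\rangle$ lies in the pseudo-effective cone, proceeds as follows. The maximal destabilizing subsheaf of $\mathbb S_{(n^r)}E$ is semistable of slope $n\lambda$, and a semistable bundle on the curve $X$ has a line subbundle of degree within a constant $c=c(X)$, independent of $n$, of its slope; this yields line bundles $M_n$ with a nonzero homomorphism $M_n\to\mathbb S_{(n^r)}E=\phi_*\mathcal O_{{\rm Gr}_r(E)}(n)$ and $\deg M_n\geq n\lambda-c$. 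Such a homomorphism is a nonzero section of $\mathcal O_{{\rm Gr}_r(E)}(n)\otimes\phi^*M_n^{-1}$, hence an effective divisor of class $n\xi-(\deg M_n)f$; dividing by $n$ and letting $n\to\infty$, the classes $\xi-(\lambda-c/n)f$ converge to $\xi-\lambda f$, which therefore lies in the closed pseudo-effective cone. As $f$ is already effective, the full cone $\langle f,\ \xi-\lambda f\rangle$ is pseudo-effective.

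For the reverse inclusion, let $D$ be an effective divisor of class $a\xi+bf$. Intersecting with $C_0$ gives $a\geq 0$, and if $a=0$ then $D$ is a pullback from $X$, forcing $b\geq 0$. If $a>0$, replace $D$ by a multiple so that $ma,mb\in\mathbb Z$; effectivity gives $0<h^0\bigl(\mathcal O_{{\rm Gr}_r(E)}(ma)\otimes\phi^*L^{mb}\bigr)=h^0\bigl(X,\ \mathbb S_{((ma)^r)}E\otimes L^{mb}\bigr)$, so a nonzero section of $\mathbb S_{((ma)^r)}E\otimes L^{mb}$ exists, forcing $\mu_{\max}\bigl(\mathbb S_{((ma)^r)}E\otimes L^{mb}\bigr)=m(a\lambda+b)\geq 0$, i.e.\ $b+\lambda a\geq 0$ (twisting by a numerically trivial bundle from ${\rm Pic}^0(X)$ leaves this slope unchanged, so the ${\rm Pic}^0$-ambiguity in the class is harmless). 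Thus the effective cone, and hence its closure, lies in $\{a\geq 0,\ b+\lambda a\geq 0\}=\langle f,\ \xi-\lambda f\rangle$; this identifies the cone for the reduced bundle, and transporting back through $\Phi^*$ delivers the stated cone for the original $E$ with $\lambda$ as in \eqref{e5}. I expect the main obstacle to be this slope equality in characteristic $p$. There, Schur functors and tensor products can raise maximal slopes, and $\mu_{\max}(\mathbb S_{(n^r)}E)=n\lambda$ holds only because strong semistability is preserved under tensor products and Schur functors (Ramanan--Ramanathan), forcing each graded piece to contribute its naive slope. Making the Harder--Narasimhan quotients strongly semistable is exactly the purpose of the Frobenius reduction of the first paragraph, which is the source of the factor $1/p^\delta$ in \eqref{e5}; verifying the slope equality is the technical heart of the proof.
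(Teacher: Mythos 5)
A preliminary remark on the comparison itself: the paper never proves Lemma \ref{lem2}. It is imported wholesale from \cite[Theorem 4.4]{BHP}, and the paper's own contribution (Theorem \ref{prop1}) begins where this lemma ends. So your proposal can only be measured against the proof in the cited reference, and in outline it does reproduce the standard strategy: reduce by Frobenius pullback to the case where the Harder--Narasimhan quotients are strongly semistable (this is indeed the origin of the factor $1/p^\delta$ in \eqref{e5}); translate effective divisors of class $a\xi+bf$ into nonzero maps $M\to\phi_*{\mathcal O}_{{\rm Gr}_r(E)}(a)$ with $\deg M=-b$; and run slope estimates on $\phi_*{\mathcal O}_{{\rm Gr}_r(E)}(n)$. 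Your cone bookkeeping is all correct: the covering-lines argument for $a\geq 0$, the $a=0$ case, the ${\rm Pic}^0$ caveat, the transfer of pseudo-effectivity both ways through $\Phi_*\Phi^*=p^\delta\,{\rm id}$, and the line-subbundle bound with constant $c=g$ (Mukai--Sakai, or just Riemann--Roch), which is indeed independent of rank and hence of $n$.

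The gap is exactly the step you flag as the technical heart, and the justification you offer would not close it. The dangerous half is the \emph{upper} bound $\mu_{\max}\bigl(\phi_*{\mathcal O}(n)\bigr)\leq n\lambda$, which is what your outer bound rests on. Ramanan--Ramanathan gives one thing: strong semistability of tensor/exterior/associated constructions applied to the strongly semistable pieces $E_{\delta,i}/E_{\delta,i-1}$. It does \emph{not} give the other thing you implicitly use, namely a filtration of $\phi_*{\mathcal O}(n)$, induced by the HN filtration of $E$, whose graded pieces are the expected tensor products of Schur-type bundles of those pieces --- your phrase ``forcing each graded piece to contribute its naive slope'' presupposes that the graded pieces exist. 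In characteristic $p$ the fiberwise object is the induced (costandard) module $\nabla\bigl((n^r)\bigr)(E)$, not the naive Schur functor (Weyl and dual Weyl modules differ), and its elementary realizations go the wrong way: it is a \emph{quotient} of $(\bigwedge^r E)^{\otimes n}$ (projective normality of the Pl\"ucker embedding) and a \emph{submodule} of $({\rm Sym}^n E)^{\otimes r}$. Quotients give no upper bound on $\mu_{\max}$, and the submodule realization only gives $\mu_{\max}\leq nr\,\mu_{\max}(E)$, which is useless unless $\ell=1$. For $r=1$ (${\rm Sym}^n$, Moriwaki's case) and for $n=1$ ($\bigwedge^r$) the required filtration is elementary, which is why the claim looks innocuous; for general $(n^r)$ the existence of an HN-induced filtration with graded pieces $\bigotimes_i\nabla(\lambda^{(i)})(E_{\delta,i}/E_{\delta,i-1})$ is a good-filtration theorem (restriction to a parabolic/Levi preserves good filtrations: Donkin, Mathieu, van der Kallen) --- a genuine theorem you must invoke, not a corollary of Ramanan--Ramanathan. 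The issue is not the semistability of the pieces but the existence of the pieces.

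By contrast, the \emph{inner} bound does not need any of this, and can be done more cheaply than in your sketch: one never needs $n\geq 2$. Inside $\bigwedge^r(F^{\delta+j}_X)^*E=\phi_{j*}{\mathcal O}(1)$ (where $\phi_j$ is the Grassmann bundle of $(F^{\delta+j}_X)^*E$) sits the subsheaf $\det\bigl((F^j_X)^*E_{\delta,\ell-1}\bigr)\otimes\bigwedge^{r-n'}(F^j_X)^*(E_{\delta,\ell}/E_{\delta,\ell-1})$, with $n'={\rm rank}(E_{\delta,\ell-1})$; it is strongly semistable of slope $p^{\delta+j}\lambda$, so Riemann--Roch produces sections of ${\mathcal O}(1)\otimes\phi_j^*M^{-1}$ with $\deg M\geq p^{\delta+j}\lambda-g$, and pushing these effective divisors down the finite map ${\rm Gr}_r\bigl((F^{\delta+j}_X)^*E\bigr)\to{\rm Gr}_r(E)$ and dividing by $p^{\delta+j}$ exhibits $\xi-\lambda f$ as a limit of effective classes. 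So the architecture of your proposal is sound and the inner edge is genuinely elementary; what is missing is a real characteristic-$p$ input for the outer edge, which as written your proof does not supply.
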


\section{The effective cone}

Set $k\,=\, \overline{\mathbb F}_p$, with $p\,>\, 0$.

The following proposition describes the effective cone of ${\rm Gr}_r(E)$ contained in the pseudo-effective cone of ${\rm Gr}_r(E)$.

\begin{theorem}\label{prop1}
The effective cone of ${\rm Gr}_r(E)$ coincides with the pseudo-effective cone of ${\rm Gr}_r(E)$.
\end{theorem}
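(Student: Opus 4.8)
Since the effective cone is a convex cone contained in the pseudo-effective cone, it is enough to prove that the two generators of the pseudo-effective cone listed in Lemma \ref{lem2} both lie in the effective cone. Write $h\,:=\,c_1(\mathcal{O}_{{\rm Gr}_r(E)}(1))$ and $f\,:=\,\phi^*c_1(L)$, so that these generators are $f$ and $h-\lambda f$. The ray $\mathbb{R}_{\geq 0}f$ is immediate: for a closed point $x\,\in\, X$ the fibre $\phi^{-1}(x)$ is an effective divisor of class $f$. Thus the entire problem is to exhibit an effective divisor whose class is a positive multiple of $h-\lambda f$.

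The plan is to reduce to the strongly semistable situation and then produce a section. First, using the relative ($\delta$-fold) Frobenius morphism $F\colon {\rm Gr}_r(E)\,\longrightarrow\,{\rm Gr}_r((F^\delta_X)^*E)$ over $X$, which is finite and flat, carries the fibre class to the fibre class, and satisfies $F^*\mathcal{O}_{{\rm Gr}_r((F^\delta_X)^*E)}(1)=\mathcal{O}_{{\rm Gr}_r(E)}(p^\delta)$, one computes that $F^*(h-\lambda' f)=p^\delta(h-\lambda f)$, where by \eqref{e4} the number $\lambda'=p^\delta\lambda$ is the constant attached to $W:=(F^\delta_X)^*E$ (whose Harder--Narasimhan pieces are strongly semistable). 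Pulling an effective divisor back along $F$ keeps it effective, so it suffices to realise a positive multiple of $h-\lambda' f$ by an effective divisor on ${\rm Gr}_r(W)$. Next, an effective divisor of class $m(h-\lambda' f)$ on ${\rm Gr}_r(W)$ is exactly a nonzero element of $H^0(X,\phi_*\mathcal{O}_{{\rm Gr}_r(W)}(m)\otimes L^{-m\lambda'})$, where $\phi_*\mathcal{O}_{{\rm Gr}_r(W)}(m)$ is the Schur functor of $W$ for the partition $(m,\ldots,m)$; in particular $\phi_*\mathcal{O}_{{\rm Gr}_r(W)}(1)=\bigwedge^rW$.

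Now I would single out the top-slope part. Because the Harder--Narasimhan pieces of $W$ are strongly semistable, so are those of its Schur functors, and a direct reading of \eqref{e4}--\eqref{e5} identifies the maximal slope of $\phi_*\mathcal{O}_{{\rm Gr}_r(W)}(m)$ as $m\lambda'$; let $T$ be its saturated maximal destabilising subbundle, strongly semistable of slope $m\lambda'$. Choosing $m>0$ with $m\lambda'\in\mathbb{Z}$, the subbundle $T\otimes L^{-m\lambda'}\,\subseteq\,\phi_*\mathcal{O}_{{\rm Gr}_r(W)}(m)\otimes L^{-m\lambda'}$ is strongly semistable of degree $0$, and any nonzero section of it includes into the larger bundle and yields the sought divisor. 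This is where the hypothesis $k=\overline{\mathbb F}_p$ is decisive: over $\overline{\mathbb F}_p$ a strongly semistable bundle of degree $0$ becomes trivial after pulling back along a suitable finite \'etale covering $\pi\colon Y\,\longrightarrow\,X$ (the Lange--Stuhler phenomenon, which already underlies \cite[Theorem 0.4]{Mo}; replacing $W$ by a further Frobenius pullback if necessary, one may assume $T\otimes L^{-m\lambda'}$ is Frobenius periodic). Hence $\pi^*(T\otimes L^{-m\lambda'})$ has a nonzero section, producing an effective divisor $D_Y$ on ${\rm Gr}_r(\pi^*W)={\rm Gr}_r(W)\times_XY$ of class $m\,\widetilde\pi^{\,*}(h-\lambda' f)$, where $\widetilde\pi\colon{\rm Gr}_r(\pi^*W)\,\longrightarrow\,{\rm Gr}_r(W)$ is the induced finite flat map. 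Then $\widetilde\pi_*D_Y$ is effective of class $m\deg(\pi)\,(h-\lambda' f)$, and pulling it back by the Frobenius morphism $F$ gives an effective divisor on ${\rm Gr}_r(E)$ whose class is a positive multiple of $h-\lambda f$.

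The main obstacle is the \'etale trivialisation step over $\overline{\mathbb F}_p$: it is exactly the input that fails in characteristic zero, where by Lemma \ref{lem1} the class $h-\lambda f$ is only pseudo-effective, and it is the precise point at which the argument leans on \cite{Mo}. Two subsidiary points need care but I expect them to be routine: that $T$ is a saturated subbundle, so its sections embed into those of $\phi_*\mathcal{O}_{{\rm Gr}_r(W)}(m)\otimes L^{-m\lambda'}$ (and the constructed divisor has class exactly $m(h-\lambda' f)$); and the identification of the maximal slope with $m\lambda'$, which is essentially the computation recorded in Lemmas \ref{lem1} and \ref{lem2}. Alternatively one may avoid invoking Lange--Stuhler directly by composing the relative Pl\"ucker embedding ${\rm Gr}_r(W)\hookrightarrow\mathbb{P}(\bigwedge^rW)={\rm Gr}_1(\bigwedge^rW)$ with \cite[Theorem 0.4]{Mo}: since $\bigwedge^rW$ is again strongly-semistably filtered, the extremal section supplied by \cite{Mo} factors through the strongly semistable top-slope piece, which maps nontrivially on the highest-weight (Cartan) component to $\phi_*\mathcal{O}_{{\rm Gr}_r(W)}(m)$, so the divisor restricts nontrivially to ${\rm Gr}_r(W)$.
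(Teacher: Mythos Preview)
Your argument is essentially correct and follows a genuinely different path from the paper's own proof.

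The paper does not pass through the Harder--Narasimhan filtration of $\phi_*\mathcal{O}_{{\rm Gr}_r(W)}(m)$ or invoke Lange--Stuhler on a degree-zero piece. Instead it quotes \cite[Proposition~2.1]{BP}: over $\overline{\mathbb F}_p$ there is a finite dominant $\Phi:Y\to X$ and a line bundle $\mathcal{L}_0$ on $Y$ with $\Phi^*E\cong\bigoplus_{i=1}^N \mathcal{L}_0^{\otimes a_i}$. Setting $\mathcal{E}:=\Phi^*(F_X^\delta)^*E=\bigoplus \mathcal{L}^{\otimes a_i}$ with $\mathcal{L}=\mathcal{L}_0^{\otimes p^\delta}$, the pulled-back Harder--Narasimhan filtration of $\mathcal{E}$ is visibly split, and one can write down by hand a line-bundle direct summand $\widetilde{\mathcal{L}}\subset\bigwedge^r\mathcal{E}$ of degree exactly $\lambda\cdot\deg(F_X^\delta\circ\Phi)$. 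This furnishes, already at $m=1$, a nonzero section of $\mathcal{O}_{{\rm Gr}_r(\mathcal{E})}(1)\otimes\varphi^*\widetilde{\mathcal{L}}^{\,*}$, whose pushforward to ${\rm Gr}_r(E)$ gives the desired effective class.

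So both proofs share the same skeleton (Frobenius to reach strongly semistable graded pieces, pass to a finite cover, exhibit a section, push forward), but the decisive $\overline{\mathbb F}_p$ input differs: you use \'etale trivialisation of a strongly semistable degree-zero bundle, whereas the paper uses the complete splitting of $\Phi^*E$ into powers of a single line bundle. The paper's route is shorter and sidesteps the analysis of $\mu_{\max}\big(\phi_*\mathcal{O}_{{\rm Gr}_r(W)}(m)\big)$ and the strong semistability of its first Harder--Narasimhan piece; your route stays closer to Moriwaki's original $r=1$ argument and makes transparent exactly which degree-zero bundle is being trivialised. The two ``routine'' points you flag (that $T$ is strongly semistable and that its slope is $m\lambda'$) are indeed routine once one observes that $\bigwedge^r W$ admits a filtration with strongly semistable graded pieces induced from that of $W$, and that the top slope among these is $\deg(E_{\delta,\ell-1})+(r-n)\mu(E_{\delta,\ell}/E_{\delta,\ell-1})=p^\delta\lambda$; but your alternative Pl\"ucker sketch is less solid, since one must still argue that the extremal section on $\mathbb P(\bigwedge^r W)$ does not vanish identically on the embedded Grassmannian.
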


\begin{proof}
The pseudo-effective cone of ${\rm Gr}_r(E)$ is described in Lemma \ref{lem2}. We need to show that the
two boundary edges are contained in the effective cone of ${\rm Gr}_r(E)$. 

The class $\phi^*c_1(L)$ in Lemma \ref{lem2} is given by a fiber of the map $\phi$ in \eqref{e1}. So it suffices
to show that the class $c_1({\mathcal O}_{{\rm Gr}_r(E)}(1))- \lambda \phi^*c_1(L)$ in Lemma \ref{lem2} lies in the
effective cone.

Fix a pair
\begin{equation}\label{e6}
(Y,\, \Phi),
\end{equation}
where $Y$ is an irreducible smooth projective curve and
$$
\Phi\,\, :\,\, Y \, \longrightarrow\, X
$$
is a dominant morphism of such that there is a line bundle ${\mathcal L}_0$ on $Y$ satisfying the condition that
\begin{equation}\label{e7}
\Phi^*E\,\,=\,\, \bigoplus_{i=1}^N {\mathcal L}^{\otimes a_i}_0
\end{equation}
(see \eqref{e0}), where $a_i$ are integers; see \cite[p.~214, Proposition 2.1]{BP} for the existence of a pair
as in \eqref{e6}, \eqref{e7} (see also \cite[p.~809, Theorem 2.2]{Mo}).

Consider the vector bundle $$(F^{\delta}_Y)^*\Phi^*E \,\,=\,\, \Phi^* (F^\delta_X)^*E$$ on $Y$ (see \eqref{e5}), where $F_Y$
is the absolute Frobenius morphism of $Y$. Denoting $(F^{\delta}_Y)^*{\mathcal L}_0\,=\,
{\mathcal L}^{\otimes p^\delta}_0$ by $\mathcal L$, from
\eqref{e7} we have
\begin{equation}\label{e8}
{\mathcal E}\,\,:=\,\,
\Phi^* (F^\delta_X)^*E\,\,=\,\, \bigoplus_{i=1}^N (F^\delta_Y)^*{\mathcal L}^{\otimes a_i}_0
\,\,=\,\, \bigoplus_{i=1}^N {\mathcal L}^{\otimes a_i}.
\end{equation}

Recall that $E_{\delta,j}/E_{\delta,j-1}$ is strongly semistable for all $1\, \leq\, j\, \leq\, n_\delta$. This implies
that $\Phi^*(E_{\delta,j}/E_{\delta,j-1})\,=\, (\Phi^*E_{\delta,j})/(\Phi^*E_{\delta,j-1})$ 
is strongly semistable for every $1\, \leq\, j\, \leq\, n_\delta$. Consequently,
\begin{equation}\label{e9}
0\,=\, \Phi^*E_{\delta,0}\, \subset\, \Phi^* E_{\delta,1}\, \subset\, \cdots\, \subset\, \Phi^*E_{\delta,n_\delta}\,=\,
\Phi^*(F^\delta_X)^*E\,=\, {\mathcal E}
\end{equation}
is the Harder--Narasimhan filtration of $\mathcal E$ (defined
in \eqref{e8}). From \eqref{e8} and \eqref{e9} it follows that each
subbundle $\Phi^* E_{\delta,j}\, \subset\, {\mathcal E}$ in \eqref{e9} is a direct sum of some of the direct summands
in \eqref{e8}. Let $\sigma$ be a permutation of $\{1,\, \cdots,\, N\}$ such that
$$
\Phi^*E_{\delta, j}\,\,=\,\, \bigoplus_{i=1}^{{\rm rank}(E_{\delta, j})} {\mathcal L}^{\otimes a_{\sigma(i)}}
$$
for all $1\, \leq\, j\, \leq\, n_\delta$. Hence
\begin{equation}\label{e10}
\widetilde{\mathcal L}\,\,:=\,\,
\left(\bigotimes_{i=1}^n {\mathcal L}^{\otimes a_{\sigma(i)}}\right)\otimes
\left({\mathcal L}^{\otimes a_{\sigma(n+1)}}\right)^{\otimes (r-n)}
\end{equation}
is a direct summand of
$$
\bigwedge\nolimits^r {\mathcal E}\,\,=\,\, \bigwedge\nolimits^r \Phi^*(F^\delta_X)^*E,
$$
where $n\,=\, {\rm rank}(E_{\delta, \ell-1})$ (see \eqref{e5} for $\ell$).
This implies that
\begin{equation}\label{e11}
{\mathcal O}_Y \,\, \subset\,\, \left(\bigwedge\nolimits^r {\mathcal E}\right)\otimes \left(\widetilde{\mathcal L}\right)^*,
\end{equation}
where $\widetilde{\mathcal L}$ is the line bundle in \eqref{e10} and 
$\mathcal E$ is the vector bundle in \eqref{e8}.

{}From \eqref{e8} and \eqref{e9} we deduce that
\begin{equation}\label{c1}
\text{degree}\left(\widetilde{\mathcal L}\right)\,\,=\,\, \lambda\cdot \text{degree}
(\Phi\circ F^{\delta}_Y) \,\,=\,\, \lambda\cdot \text{degree}(F^\delta_X\circ\Phi),
\end{equation}
where $\lambda$ is defined in \eqref{e5}.

Let
\begin{equation}\label{e12}
\varphi\,\, :\,\, {\rm Gr}_r({\mathcal E})\,\, \longrightarrow\,\, Y
\end{equation}
be the Grassmann bundle parametrizing the $r$ dimensional quotients of the fibers of the
vector bundle $\mathcal E$ in \eqref{e8}. Let
$${\mathcal O}_{{\rm Gr}_r({\mathcal E})}(1) \,\, \longrightarrow\, \, {\rm Gr}_r({\mathcal E})$$
be the tautological line bundle whose fiber over the point of ${\rm Gr}_r(E)$ representing a quotient ${\mathcal E}_y \,
\longrightarrow\, Q$, where $y\, \in\, Y$, is $\bigwedge^r Q$. We have a Cartesian diagram
\begin{equation}\label{e13}
\begin{matrix}
{\rm Gr}_r({\mathcal E})& \xrightarrow{\,\,\,\Psi\,\,\,} & {\rm Gr}_r(E)\\
\,\,\, \Big\downarrow\varphi && \,\,\, \Big\downarrow\phi\\
Y & \xrightarrow{\,\,\,F^\delta_X\circ\Phi\,\,\,} & X
\end{matrix}
\end{equation}
(see \eqref{e12}, \eqref{e1} and \eqref{e6}). We have
\begin{equation}\label{c2}
\Psi^* {\mathcal O}_{{\rm Gr}_r(E)}(1)\,\,=\,\, {\mathcal O}_{{\rm Gr}_r({\mathcal E})}(1),
\end{equation}
where $\Psi$ is the map in \eqref{e13}.

Combining \eqref{c1} and \eqref{c2} it follows that
\begin{equation}\label{e14}
{\Psi}^*(c_1({\mathcal O}_{{\rm Gr}_r(E)}(1))- \lambda \phi^*c_1(L))\,\,=\,\,
c_1({\mathcal O}_{{\rm Gr}_r({\mathcal E})}(1)) - \varphi^* c_1(\widetilde{\mathcal L}),
\end{equation}
where $\widetilde{\mathcal L}$ is the line bundle in \eqref{e10}. By the projection formula,
$$
H^0\left({\rm Gr}_r({\mathcal E}),\, {\mathcal O}_{{\rm Gr}_r({\mathcal E})}(1)
\otimes \varphi^*\left(\widetilde{\mathcal L}\right)^*\right)\,\,=\,\,
H^0\left(Y,\, \left(\bigwedge\nolimits^r{\mathcal E}\right)\otimes \left(\widetilde{\mathcal L}\right)^*\right),
$$
and hence from \eqref{e11} we conclude that $H^0\left({\rm Gr}_r({\mathcal E}),\, {\mathcal O}_{{\rm Gr}_r({\mathcal E})}(1)
\otimes \varphi^*\left(\widetilde{\mathcal L}\right)^*\right)\,\not=\, 0$. This implies that the N\'eron--Severi class of
${\mathcal O}_{{\rm Gr}_r({\mathcal E})}(1)\otimes \varphi^*\left(\widetilde{\mathcal L}\right)^*$ is effective. 

Note that the pull-back map
$\Psi^*\,:\, {\rm NS}({\rm Gr}_r(E))_{\mathbb R} \,\longrightarrow\, {\rm NS}({\rm Gr}_r({\mathcal E}))_{\mathbb R}$
induced by $\Psi$ is an isomorphism. Therefore, from \eqref{e14}
it follows that the class $c_1({\mathcal O}_{{\rm Gr}_r(E)}(1))- \lambda \phi^*c_1(L)\, \in\,
{\rm NS}({\rm Gr}_r(E))_{\mathbb R}$ is effective. This completes the proof.
\end{proof}

\section*{Acknowledgements} A part of this work was done when the first and the third authors visited the Department of Mathematics, IIT 
Bombay. They are grateful for its hospitality. The third author was partially supported by a grant from Infosys Foundation.

\end{document}